\newcommand{\mybeginproof}{\emph{Proof.} } 
\newcommand{\myendproof}{\hfill\qedsymbol\vspace*{1ex}}
\newcommand{\Nn}{\mathbb{N}}
\newcommand{\Zz}{\mathbb{Z}}
\renewcommand {\leq}{\leqslant}
\renewcommand {\geq}{\geqslant}
\newcommand{\val}{\mathop{\mathrm{val}}\nolimits} 
\renewcommand {\le}{\leqslant}
\renewcommand {\ge}{\geqslant}
\renewcommand {\leq}{\leqslant}
\renewcommand {\geq}{\geqslant}
\theoremstyle{plain}
\newtheorem{theorem}{Theorem}    
\newtheorem{lemma}[theorem]{Lemma}       
\newtheorem{proposition}[theorem]{Proposition}      
\newtheorem{corollary}[theorem]{Corollary}      
\theoremstyle{remark}
\newtheorem*{remark*}{Remark}  
\title{Waring's problem for polynomials\linebreak\  in two variables}
\author{Arnaud Bodin}
\email{Arnaud.Bodin@math.univ-lille1.fr}
\author{Mireille Car}
\email{Mireille.Car@univ-cezanne.fr}
\address{Laboratoire Paul Painlev\'e, Math\'ematiques, Universit\'e 
Lille 1, 59655 Villeneuve d'Ascq Cedex, France}
\address{Universit\'e Paul C\'ezanne, Facult\'e de Saint-J\'er\^ome, 
13397 Marseille Cedex, France}
\subjclass[2000]{11P05 (13B25, 11T55)}
\keywords{Several variables polynomials, sum of powers, approximate roots, Vandermonde determinant}
\date{\today}
\begin{document}

\begin{abstract}
We prove that all polynomials in several variables can be decomposed as the sums of $k$th powers:
$P(x_1,\ldots,x_n) = Q_1(x_1,\ldots,x_n)^k+\cdots + Q_s(x_1,\ldots,x_n)^k$, 
provided that elements of the base field are themselves sums of $k$th powers.
We also give bounds for the number of terms $s$ and the degree of the $Q_i^k$.
We then improve these bounds in the case of two variables polynomials of large degree
to get a decomposition $P(x,y) = Q_1(x,y)^k+\cdots + Q_s(x,y)^k$
with $\deg Q_i^k \le \deg P + k^3$ and $s$ that depends on $k$ and $\ln (\deg P)$.
\end{abstract}

\maketitle


\section{Introduction}
\label{sec:intro}

For any domain $A$ and any integer $k\ge 2$, let $W(A,k)$ denote the 
subset of $A$ formed by all finite sums of $k$th powers $a^{k}$ with $a\in A$. 
Let $\underline{w}_A(k)$  denote the least integer $s$, if it exists, 
such that for every element $a\in W(A,k)$, the equation
$$a = a_{1}^{k} + \cdots + a_{s}^{k}$$ 
admits solutions $(a_{1},\ldots,a_{s}) \in A^{s}$. 

\bigskip

The case of polynomial rings $K[t]$ over a field $K$ is of particular interest (see \cite{Va1}, \cite{GaVa}).  
The similarity between the arithmetic of the ring $\Zz$ and the arithmetic
 of the polynomial rings in a single variable $F[t]$ over a finite field $F$ 
with  $q$ elements led to investigate a restricted variant of Waring's problem 
over $F[t]$, namely the strict Waring problem. For $P\in F[t]$, a representation
$$  P = Q_{1}^{k} + \cdots + Q_{s}^{k} \quad \text{ with } \deg Q_{i}^k < \deg P +k,$$
and $Q_{i} \in F[t]$ is a {\it strict representation}.

For the strict Waring problem, analog to 
the classical numbers $g_{\Nn}(k)$ and $G_{\Nn}(k)$ have been defined 
as follows. Let $g_{F[t]}(k)$ (resp. $G_{F[t]}(k)$) denote the least 
integer $s$, if it exists, such that every polynomial in
$W(F[t],k)$ (resp. every polynomial in $W(F[t],k)$ of sufficiently 
large degree) may be written as a sum satisfying the 
strict degree condition. 

General results about Waring's problem for the ring of polynomials 
over a finite field may be found in  \cite{Pa}, \cite{Va1}, \cite{Va3}, 
\cite{Va4}, \cite{LiWo} for the unrestricted problem and in \cite{We}, 
\cite{Ku}, \cite{EfHa}, \cite{Ca1}, \cite{GaVa} for the strict Waring problem. 
Gallardo's method introduced in \cite{Ga1} and performed in \cite{CG1} 
to deal with Waring's problem for cubes was generalized in \cite{Ca1} 
and \cite{GaVa}  where bounds for $g_{F[t]}(k)$ and $G_{F[t]}(k)$  
were established when  $q$ and $k$ satisfy some conditions. 

\bigskip

The goal of this paper is a study of Waring's problem for the ring 
$F[x,y]$ of polynomials in two variables over a field $F$. 
As for the one variable case, two variations of Waring's problem may be considered. 
The first one, is the unrestricted Waring's problem; the second one takes degree conditions in account.  

In Section \ref{sec:onetwo} we start by some relations between Waring's 
problem for polynomials in one variable and Waring's problem for polynomials 
in $n\geq2$ variables. In Section \ref{sec:vandermonde}, we prove that, 
provided all elements of the field  $F$ are sums of $k$th powers, 
there exists a positive integer $s$ (depending on $F$ and $k$) such that 
every polynomial $P\in F[x,y]$ may be written as a sum 
\begin{equation}
\label{eq:dag}
P = Q_{1}^{k} + \cdots + Q_{s}^{k},
\tag{\dag}
\end{equation}
where for $i = 1,\ldots, s$, $Q_{i}$ is a polynomial of 
$K[x,y]$ such that $\deg Q_{i} \leq  \deg P$. We then prove 
various improvements, the goal being to have in representations 
(\ref{eq:dag}) a decomposition with the following properties: 
the first priority is to have the lowest possible degree for 
the polynomials $Q_{i}$ and the second priority is a small number of terms. 
In Section \ref{sec:strict}, we prove that  (\ref{eq:dag}) is possible for polynomials 
of large degree
with  $\deg Q_{i}^{k} \leq \deg P + k^{3}$,  
the number $s$ of terms depending on $F$, $k$ and $\deg P$. 
To do that, in Section \ref{sec:root}, we introduce the notion of {approximate root}.

\bigskip

Let $F$ be a field such that: $F$ has more than $k$ elements, the characteristic of $F$ does not divide $k$ and
each element of $F$ can be written as a sum of $w_F(k)$ $k$th powers of elements of $F$.
We summarize in the tabular below the different bounds we get for a decomposition
of a polynomial $P(x,y)$ of degree $d$ as a sum $P = \sum_{i=1}^{s}{Q_i^k}$.

\renewcommand{\arraystretch}{1.3}
\begin{center}
\begin{tabular}{|c|c|c|}
\cline{2-3} \multicolumn{1}{c|}\       & $\deg Q_{i}^{k}$   & $s$           \\
\hline Corollary \ref{cor:vandermonde} &    $kd$        & $kw_{F}(k)$  \\
\hline Proposition \ref{prop:strict}   & $d + 2(k-1)^2$ & $\frac{1}{2}k(d+1)(d+2)w_{F}(k)$ \\
\hline Proposition \ref{prop:strict2}  &  $2d + 4k^2$   & $k^2(2k-1)w_F(k)$ \\
\hline Theorem \ref{th:sum}            &  $d+k^3$       & $2k^3\ln(\frac{d}{k}+1)\ln(2k) + 7k^4\ln(k) w_F(k)^2$ \\
\hline 
\end{tabular}
\end{center}

\medskip 

The two basic results are Corollary \ref{cor:vandermonde} that give a 
decomposition with very few terms of high degree 
and Proposition \ref{prop:strict} with many terms of low degree. Our 
first main result is Proposition \ref{prop:strict2},
that provides a decomposition with terms of medium degree, but the number 
of terms depends only on $k$ and not on the degree of $P$.
Then Theorem \ref{th:sum} decomposes $P$, of sufficiently large degree $d\ge 2k^4$,
into a sum of few terms of low degree.

For instance, let a field with $w_{F}(k)=1$ (that is to say each element of $F$ is a $k$th power),
set $d=200$ and $k=3$, then each polynomial $P(x,y)$ of degree $200$ can be written $P = \sum_{i=1}^{s}{Q_i^3}$
with\footnote{In fact the last bound comes from a sharper bound obtained in the proof of Theorem~\ref{th:sum}.}  
\renewcommand{\arraystretch}{1.2}
\begin{center}
\begin{tabular}{|c|c|c|}
\cline{2-3} \multicolumn{1}{c|}\       & $\deg Q_i^k$ & $s$           \\
\hline Corollary \ref{cor:vandermonde} & $600$        & $3$  \\
\hline Proposition \ref{prop:strict}   & $208$        & $60903$ \\
\hline Proposition \ref{prop:strict2}  & $436$        & $45$ \\
\hline Theorem \ref{th:sum}            & $227$        & $812$ \\
\hline 
\end{tabular}
\end{center}

\section{The unrestricted Waring's problem}
\label{sec:onetwo}

If $A$ is a domain, we denote by $W(A,k,s)$ the set of elements $a\in A$ 
that can be written as a sum $a = a_{1}^{k} + \cdots + a_{s}^{k}$ with $a_{1}, \ldots, a_{s}\in A$; 
if $A = W(A,k,s)$ for an integer $s$, then for any integer $s' \geq s$, 
we have $A = W(A,k,s')$. Let $w_{A}(k)$ denote the least integer $s$ 
such that $A = W(A,k,s)$. If such a $s$ does not exist, let $w_{A}(k) = \infty$. 
Observe that $w_{A}(k) \ge \underline{w}_{A}(k)$ and in the case that $A=W(A,k)$ then
$w_{A}(k) = \underline{w}_{A}(k)$.
In this section we are concerned with rings of polynomials in $n\geq1$ variables. 

\begin{lemma} Let $A$ be a domain and let $s$ be a positive integer. 
\begin{enumerate}
  \item If $A[t] = W(A[t],k,s)$, then $A=W(A,k,s)$, so that $w_A(k) \le w_{A[t]}(k)$. 
  \item $A[t] = W(A[t],k,s)$ if and only if $A[x_1,\ldots, x_n] = W(A[x_1,\ldots, x_n],k,s)$,
 so that $w_{A[x_1,\ldots, x_n]}(k) = w_{A[t]}(k)$.  
\end{enumerate}
\end{lemma}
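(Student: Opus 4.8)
The plan is to handle the two statements separately, with (1) being essentially trivial and (2) reducing to the single-variable-adjunction case by induction on $n$. For part (1): suppose $A[t] = W(A[t],k,s)$. Given $a \in A$, view $a$ as a constant polynomial in $A[t]$, so $a = Q_1^k + \cdots + Q_s^k$ for some $Q_i \in A[t]$. Comparing degrees, since $\deg a = 0$ and $k \ge 2$, each $Q_i$ must be a constant (if some $Q_i$ had positive degree, the top-degree terms of the $Q_i^k$ would have to cancel, but here I would simply note that the leading coefficient of the highest-degree $Q_i$, raised to the $k$th power, cannot be cancelled by lower-degree contributions unless several $Q_i$ share that top degree — so one needs a small argument). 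The cleanest route: let $d = \max_i \deg Q_i$; if $d > 0$, look at the coefficient of $t^{kd}$ on both sides; it equals $\sum_{i : \deg Q_i = d} c_i^k$ where $c_i$ is the leading coefficient of $Q_i$, and this must be $0$. This does not immediately force $d = 0$ over a general domain, so instead I would argue by substitution: specialize $t \mapsto 0$, which is a ring homomorphism $A[t] \to A$, sending $a \mapsto a$ and $Q_i \mapsto Q_i(0) \in A$, giving $a = Q_1(0)^k + \cdots + Q_s(0)^k \in W(A,k,s)$. This substitution argument is clean and avoids any degree bookkeeping. Hence $A = W(A,k,s)$, and the inequality $w_A(k) \le w_{A[t]}(k)$ follows by taking $s = w_{A[t]}(k)$.

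For part (2), one direction is immediate: if $A[x_1,\ldots,x_n] = W(A[x_1,\ldots,x_n],k,s)$, then since $A[t] \cong A[x_1]$ sits inside $A[x_1,\ldots,x_n]$ as the subring where $x_2 = \cdots = x_n = 0$, the same substitution trick (apply the homomorphism $x_2,\ldots,x_n \mapsto 0$ to a representation of $P \in A[t]$) yields $A[t] = W(A[t],k,s)$. The substantive direction is: $A[t] = W(A[t],k,s)$ implies $A[x_1,\ldots,x_n] = W(A[x_1,\ldots,x_n],k,s)$. I would do this by induction on $n$, the base case $n=1$ being trivial. For the inductive step, write $B = A[x_1,\ldots,x_{n-1}]$; by the induction hypothesis $B = W(B,k,s)$ (using that $A[t]=W(A[t],k,s)$ gives $B = W(B,k,s)$ — this is exactly the $n-1$ case applied to $A$). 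Now I want $B[x_n] = W(B[x_n],k,s)$. Here is where I cannot simply invoke $B[t] = W(B[t],k,s)$ as a hypothesis — that is what needs proving — so the real content is a single lemma: \emph{if $R$ is a domain with $R = W(R,k,s)$, then $R[t] = W(R[t],k,s)$.}

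The hard part is exactly this last implication, and I expect the authors prove it by a Vandermonde-type interpolation argument (consistent with the paper's keywords and the title of Section~\ref{sec:vandermonde}). The idea: given $P(t) \in R[t]$ of degree $d$, one wants to write $P = \sum_{i=1}^s Q_i(t)^k$. Pick $d+1$ distinct points (this is where one might need $R$ infinite, or enough elements — but the statement as written has no such hypothesis, so presumably the construction is purely formal). Alternatively, and more likely given that no cardinality hypothesis appears: use that $t^k$-type monomials and the expansion $(at^i + bt^j + \cdots)^k$ can realize, via multinomial coefficients, any monomial $t^m$ up to a sum of $k$th powers, combined with $R = W(R,k,s)$ to absorb the coefficients. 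Concretely, one shows each monomial $c\, t^m$ lies in $W(R[t],k,s)$ using the identity that expresses $t^m$ as an $R$-linear combination of $k$th powers of polynomials (for instance via finite differences of $(t+j)^k$ type expressions, or the Vandermonde determinant being invertible once $k! $ or small integers are units — but again, absent hypotheses, the formal multinomial identity must suffice), and then the number of summands stays bounded by $s$ rather than growing because one reuses the same $s$-term representation of field/ring elements at each stage and collects. I would flag that getting $s$ (not $s \cdot(\text{something})$) in the conclusion is the delicate point: it forces the argument to be a genuine "single $k$th power of a polynomial encodes many coefficients simultaneously" construction rather than a naive term-by-term one. Once that key implication is in hand, the induction closes: $B = W(B,k,s) \Rightarrow B[x_n] = W(B[x_n],k,s)$, i.e. $A[x_1,\ldots,x_n] = W(A[x_1,\ldots,x_n],k,s)$, and the equality of the $w$-invariants follows by applying the implications in both directions with the optimal $s$.
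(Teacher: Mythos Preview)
Your treatment of part~(1) and of the easy direction of part~(2) is fine and matches the paper (the paper specializes at $t=1$ rather than $t=0$, which is immaterial).

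For the substantive direction of (2), however, you miss the actual argument and replace it by a harder claim that is in fact \emph{false}. The paper's proof is a one-line substitution: from $A[t]=W(A[t],k,s)$ one has in particular
\[
t \;=\; Q_1(t)^k+\cdots+Q_s(t)^k \qquad (Q_i\in A[t]),
\]
and since the $Q_i$ have coefficients in $A$, substituting $t\mapsto P$ for any $P\in A[x_1,\ldots,x_n]$ yields $P=\sum_i Q_i(P)^k$ with $Q_i(P)\in A[x_1,\ldots,x_n]$. This gives exactly $s$ summands with no bookkeeping, no induction, and no Vandermonde input. (The paper records this as the remark that $A[t]=W(A[t],k,s)$ is equivalent to $t\in W(A[t],k,s)$.)

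Your inductive route instead reduces to the implication ``$R=W(R,k,s)\Rightarrow R[t]=W(R[t],k,s)$'' for a general domain $R$, which you call ``the real content.'' That implication is not available here, and indeed fails without extra hypotheses: take $R=\Zz$, $k=2$, $s=4$. Then $\Zz=W(\Zz,2,4)$ by Lagrange, but $t$ is not a sum of squares in $\Zz[t]$, since a sum $\sum f_i(t)^2$ has top coefficient $\sum(\text{lead }f_i)^2>0$ in degree $2\max_i\deg f_i$, hence even degree. The Vandermonde argument you anticipate does appear later in the paper (Proposition~\ref{prop:vandermonde}), but it requires $F$ to be a field with more than $k$ elements and characteristic not dividing $k$; it is described there precisely as only ``a kind of reciprocal'' to part~(1). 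So the route you sketch cannot close, while the intended argument is the trivial substitution above.
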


A kind of reciprocal to (1) will be discussed later in Proposition \ref{prop:vandermonde}.

{\it Proof} 
\begin{enumerate}
  \item Suppose $A[t] = W(A[t],k,s)$. Every $a \in A$ is a sum
$a = Q_{1}^{k}+\cdots + Q_{s}^{k}$ for some $Q_{i} \in A[t]$. Specializing $t$
at $1$ for instance, gives $a= Q_{1}(1)^{k}+\cdots + Q_{s}(1)^{k}$, 
a sum in $A$. Therefore, $w_{A[t]}(k) \geq w_{A}(k)$.

  \item 
  \begin{enumerate}
    \item If   $A[t] = W(A[t],k,s)$, then there exist $Q_{1},\ldots, Q_{s} \in A[t]$ 
such that $t = Q_{1}(t)^{k}+ \cdots + Q_{s}(t)^{k}$.
Pick $P \in A[x_1,\ldots, x_n]$ and substitute $P$ for $t$, we get:
$P(x_1,\ldots, x_n) = Q_{1}(P(x_1,\ldots, x_n))^k+\cdots+Q_{s}(P(x_1,\ldots, x_n))^k$.
Hence $w_{A[x_1,\ldots, x_n]}(k)\leq w_{A[t]}(k)$.
    \item If $A[x_1,\ldots, x_n] = W(A[x_1,\ldots, x_n],k,s)$ then any $P(t) \in A[t]$ 
can be written $P(t) = Q_{1}(t,x_2,\ldots, x_n)^k+\cdots+Q_{s}(t,x_2,\ldots, x_n)^k$.
By the specialization $x_2 = \cdots = x_n = 1$ we get that $P(t)\in W(A[t],k,s)$. 
Therefore $w_{A[x_1,\ldots, x_n]}(k)\geq w_{A[t]}(k)$.
  \end{enumerate}
\end{enumerate}

\begin{remark*}
It is also true that $A[t] = W(A[t],k,s)$ if and only if $t \in W(A[t],k,s)$.
\end{remark*}

This remark motivates the fact that we consider Waring's problem for a 
polynomial ring $F[x_1,\ldots, x_n]$ where $F$ is a field satisfying 
the condition $F = W(F,k)$. Such a field is called a 
{\it Waring field for the exponent} $k$, or briefly, a $k$-\emph{Waring field}. 

Let us give some examples. An algebraically closed field $F$ is a 
$k$-Waring field with $w_{F}(k) = 1$ for every positive integer $k$. 
If $F$ is a finite field of characteristic $p$, for every positive 
integer $n$, $F$ is a $p^{n}$-Waring field with $w_{F}(p^{n})=1$. 
It is known, c.f. \cite{Bh}, \cite{EfHa}, that  for a finite field 
$F$ of characteristic $p$ that does not divide $k$ and order $q=p^{m}$, 
$F$ is a Waring field for the exponent $k$ if and only if for all $d\neq m$ 
dividing $m$,  $(q-1)/(p^d -1)$ does not divide $k$.  

When $F$ has prime characteristic $p$, it is sufficient to consider 
Waring's problem for exponents $k$ coprime with $p$. Indeed, we have
\begin{proposition} 
Let $k\geq 2$  be coprime with $p$. Then, for any positive integer 
$\nu$ and for any positive integer $s$, we have
$$ W(F[x_1,\ldots, x_n],kp^{\nu},s) = \big\{Q^{p^{\nu}} \mid Q \in W(F[x_1,\ldots, x_n],k,s))\big\},$$
$$w_{F[x_1,\ldots, x_n]}(kp^{\nu}) = w_{F[x_1,\ldots, x_n]}(k).$$
\end{proposition}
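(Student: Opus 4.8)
The plan is to exploit the Frobenius endomorphism $Q \mapsto Q^p$, which in characteristic $p$ is an injective ring homomorphism, so that it commutes with taking $k$th powers and with sums: $(Q_1^k + \cdots + Q_s^k)^{p^\nu} = (Q_1^{p^\nu})^k + \cdots + (Q_s^{p^\nu})^k$. This immediately gives one inclusion. First I would prove the set-theoretic identity. For the inclusion ``$\supseteq$'': if $Q = R_1^k + \cdots + R_s^k$ with $R_i \in F[x_1,\ldots,x_n]$, then applying Frobenius $\nu$ times yields $Q^{p^\nu} = (R_1^{p^\nu})^k + \cdots + (R_s^{p^\nu})^k = (R_1^k)^{p^\nu} + \cdots$, hence $Q^{p^\nu} \in W(F[x_1,\ldots,x_n],kp^\nu,s)$. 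For the inclusion ``$\subseteq$'': suppose $P \in W(F[x_1,\ldots,x_n],kp^\nu,s)$, say $P = S_1^{kp^\nu} + \cdots + S_s^{kp^\nu} = (S_1^{p^\nu})^k + \cdots + (S_s^{p^\nu})^k$. Setting $Q = S_1^{p^\nu} + \cdots$? No --- rather, one checks directly that $P$ itself is a $p^\nu$th power: since $p^\nu$ is coprime to... actually the cleanest route is to note $P = (S_1^k + \cdots + S_s^k)^{p^\nu}$ would require the $p^\nu$th power map to be additive, which it is in characteristic $p$. So $P = Q^{p^\nu}$ where $Q = S_1^k + \cdots + S_s^k \in W(F[x_1,\ldots,x_n],k,s)$. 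This establishes both inclusions and hence the displayed set equality.

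Next I would deduce the equality of Waring numbers from the set identity. For one direction, suppose $w_{F[x_1,\ldots,x_n]}(k) = s$, so every $P \in F[x_1,\ldots,x_n]$ is a sum of $s$ $k$th powers; then every $P^{p^\nu}$ is a sum of $s$ $(kp^\nu)$th powers. But I need every element of $F[x_1,\ldots,x_n]$ to be such a sum, not just the $p^\nu$th powers. Here is where I would use that $F$ is (implicitly, in this section's setting) or more precisely I would use the previous Proposition's reasoning: actually the key point is that $W(F[x_1,\ldots,x_n],kp^\nu,s)$ consists \emph{only} of $p^\nu$th powers, so $F[x_1,\ldots,x_n] = W(F[x_1,\ldots,x_n],kp^\nu,s)$ forces every polynomial to be a $p^\nu$th power --- which is false unless... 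Hmm, so in fact $w_{F[x_1,\ldots,x_n]}(kp^\nu)$ could be infinite. I suspect the intended reading is that the equality holds \emph{as elements of the extended naturals}, i.e. both sides are simultaneously finite and equal, or both infinite. So I would argue: $w_{F[x_1,\ldots,x_n]}(k) \le s$ iff every $P$ is a sum of $s$ $k$th powers iff (applying Frobenius, which is a bijection onto the $p^\nu$th powers, and using the set identity) every $p^\nu$th power is a sum of $s$ $(kp^\nu)$th powers iff $w_{F[x_1,\ldots,x_n]}(kp^\nu) \le s$; the last ``iff'' uses that $W(F[x_1,\ldots,x_n],kp^\nu,s)$ is contained in the $p^\nu$th powers, so demanding it equal all of $F[x_1,\ldots,x_n]$ is the same as demanding it equal all $p^\nu$th powers, which via the bijection is the same as $W(F[x_1,\ldots,x_n],k,s)$ being all of $F[x_1,\ldots,x_n]$.

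Concretely the steps are: (1) recall Frobenius $\Phi\colon Q \mapsto Q^p$ is an injective endomorphism of $F[x_1,\ldots,x_n]$ when $\mathrm{char}\, F = p$, and $\Phi^\nu$ has image exactly the set of $p^\nu$th powers; (2) observe $\Phi^\nu(Q^k) = \Phi^\nu(Q)^k = (Q^{p^\nu})^k$ and $\Phi^\nu$ is additive, giving the containment of $W(\cdot,kp^\nu,s)$ in $\Phi^\nu(W(\cdot,k,s))$; (3) observe conversely $S^{kp^\nu} = (S^{p^\nu})^k$ shows every $(kp^\nu)$th power is a $k$th power of a $p^\nu$th power, and chase sums through $\Phi^\nu$ to get the reverse containment, completing the proof of the displayed set equality; (4) since $\Phi^\nu$ is injective, $W(\cdot,k,s) = F[x_1,\ldots,x_n]$ iff $\Phi^\nu(W(\cdot,k,s)) = \Phi^\nu(F[x_1,\ldots,x_n])$ iff $W(\cdot,kp^\nu,s)$ equals the set of all $p^\nu$th powers; interpreting the Waring number equality in the appropriate sense (both finite and equal, or both infinite), conclude $w_{F[x_1,\ldots,x_n]}(kp^\nu) = w_{F[x_1,\ldots,x_n]}(k)$. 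The main obstacle is step (4): pinning down the correct interpretation of the second displayed equation so that it is actually true --- the naive reading that every polynomial is a sum of $(kp^\nu)$th powers is too strong --- and making sure the bijectivity of Frobenius is used cleanly to transport the ``every element is a sum of $s$ powers'' statement back and forth; everything else is a short formal computation with the Frobenius map.
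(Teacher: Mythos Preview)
Your approach is exactly the one the paper indicates: it cites the identity $(Q_1^k+\cdots+Q_s^k)^{p}=Q_1^{pk}+\cdots+Q_s^{pk}$ and refers to \cite{Ca1} for the details, and your Frobenius argument is precisely the unwinding of that hint. Your proof of the first displayed set equality is correct and complete.

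Your hesitation in step (4) is well placed, and in fact the difficulty is sharper than you state. With the paper's definition of $w_A(k)$ as the least $s$ with $A=W(A,k,s)$, the quantity $w_{F[x_1,\ldots,x_n]}(kp^{\nu})$ is \emph{always} $\infty$ for $\nu\ge 1$ and $n\ge 1$: the set identity you proved shows $W(F[x_1,\ldots,x_n],kp^{\nu},s)$ consists only of $p^{\nu}$th powers, yet $x_1$ is not a $p^{\nu}$th power. Meanwhile $w_{F[x_1,\ldots,x_n]}(k)$ can be finite (e.g.\ by Corollary~\ref{cor:vandermonde} when $F$ is a $k$-Waring field with enough elements). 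So your proposed ``extended naturals'' reading does not rescue the equation either, and in your chain of iffs the step ``$W(\cdot,kp^{\nu},s)$ equals all $p^{\nu}$th powers iff $w_{F[x_1,\ldots,x_n]}(kp^{\nu})\le s$'' is not the paper's definition of $w$. The statement becomes correct, and your argument goes through verbatim, if one reads $\underline{w}$ in place of $w$ (the least $s$ such that every element of $W(A,kp^{\nu})$ is a sum of $s$ $(kp^{\nu})$th powers); this is almost certainly what is intended, and is what your Frobenius bijection actually proves.
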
 

The proof is similar to that of \cite[Theorem 2.1]{Ca1}
and relies on the relation $(Q_1^k+\cdots+Q_s^k)^p = Q_1^{pk}+\cdots+Q_s^{pk}$.

\section{Vandermonde determinants}
\label{sec:vandermonde}

\subsection{Sum with high degree}

Let us recall that for $(\alpha_{1},\ldots,\alpha_{n})\in L^{n}$, 
where $L$ is a field containing $F$, Vandermonde's determinant
$V(\alpha_{1},\ldots,\alpha_{n})$ verifies:
\begin{equation}
\label{eq:vandermonde}
V(\alpha_1,\ldots,\alpha_n) := 
\begin{vmatrix}
1      & \alpha_1 & \alpha_1^2 & \cdots & \alpha_1^{n-1} \\  
1      & \alpha_2 & \alpha_2^2 & \cdots & \alpha_2^{n-1} \\ 
\vdots &          &            &        & \vdots         \\
1      & \alpha_n & \alpha_n^2 & \cdots & \alpha_n^{n-1} \\    
\end{vmatrix}
= \prod_{1\le i < j \le n}(\alpha_i-\alpha_j).
\end{equation}

\begin{proposition}
\label{prop:vandermonde}
Let $F$ be a field with more than $k$ elements, whose characteristic does not divide $k$, such that each element of $F$ can
be written as a sum of $k$th powers of elements of $F$.
Then any polynomial $P(x_1,\ldots,x_n)$ with coefficients in the field $F$ is a sum of $k$th powers.
In other words, for any positive integer $n$,
 $$F[x_1,\ldots,x_n] = W(F[x_1,\ldots,x_n],k).$$
\end{proposition}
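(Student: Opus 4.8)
The plan is to reduce to the one-variable case and then exploit the Vandermonde identity to produce $k$th powers out of monomials. By Lemma 1(2), it suffices to prove $F[t] = W(F[t],k)$, or equivalently (by the Remark) that $t$ itself is a sum of $k$th powers in $F[t]$. So first I would look for a representation of a single monomial, say $t$, or more generally of $t^j$ for $0\le j\le k-1$, as an $F$-linear combination (with coefficients that are themselves $k$th powers, or sums of $k$th powers, using the hypothesis $F=W(F,k)$) of expressions of the form $(\alpha t + \beta)^k$.

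The key observation is that if we fix $k$ distinct elements $\alpha_1,\dots,\alpha_k\in F$ — which exist since $F$ has more than $k$ elements — then the $k\times k$ matrix $(\alpha_i^{\,j})_{1\le i\le k,\,0\le j\le k-1}$ is the Vandermonde matrix, invertible by \eqref{eq:vandermonde}. Hence for each $j\in\{0,\dots,k-1\}$ there are coefficients $c_{i}\in F$ with $\sum_{i=1}^k c_i\,\alpha_i^{\,\ell} = \delta_{j\ell}$ for all $\ell$; but I actually want to go the other way. Consider the polynomials $(t+\alpha_i)^k = \sum_{\ell=0}^k \binom{k}{\ell}\alpha_i^{\,k-\ell} t^\ell$. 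Taking an $F$-linear combination $\sum_i \lambda_i (t+\alpha_i)^k$, the coefficient of $t^\ell$ is $\binom{k}{\ell}\sum_i \lambda_i \alpha_i^{\,k-\ell}$. Because the characteristic does not divide $k$, the binomial coefficient $\binom{k}{k-1}=k$ is invertible; so by choosing the $\lambda_i$ to solve the appropriate Vandermonde system (using that the $\alpha_i$ are distinct and there are $k$ of them against $k$ unknowns) I can arrange that $\sum_i \lambda_i (t+\alpha_i)^k$ equals $t$ plus a constant, i.e. $t = \sum_i \lambda_i (t+\alpha_i)^k - c$ for some $c\in F$.

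It then remains to absorb the scalars. Each $\lambda_i\in F$ need not be a $k$th power, but by hypothesis every element of $F$ — in particular $\lambda_i$ and $-c$ — is a sum of at most $w_F(k)$ $k$th powers, say $\lambda_i = \sum_m \mu_{i,m}^k$; then $\lambda_i(t+\alpha_i)^k = \sum_m \big(\mu_{i,m}(t+\alpha_i)\big)^k$ is a sum of $k$th powers in $F[t]$, and likewise the constant $-c = \sum_m \nu_m^k$ contributes the $k$th powers $\nu_m^k$. Assembling these gives $t$ as an explicit finite sum of $k$th powers of linear polynomials in $F[t]$, so $t\in W(F[t],k)$, hence $F[t]=W(F[t],k)$ by the Remark, and finally $F[x_1,\dots,x_n]=W(F[x_1,\dots,x_n],k)$ by Lemma 1(2). (This argument also yields the quantitative bound $s = k\,w_F(k)$ and degree $kd$ recorded in Corollary~\ref{cor:vandermonde}, by substituting a degree-$d$ polynomial $P$ for $t$.)

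The main obstacle — really the only point requiring care — is ensuring the linear system for the $\lambda_i$ is solvable with the right shape, i.e. that one can hit "$t$ plus a constant" rather than being forced to also control lower-degree terms; this is exactly where the two hypotheses on $F$ enter, the $k$ distinct elements giving an invertible Vandermonde matrix and $\mathrm{char}\,F\nmid k$ making the leading binomial coefficient a unit so that the $t$-coefficient can be normalized. Everything else is the routine bookkeeping of clearing scalars via the Waring property of $F$.
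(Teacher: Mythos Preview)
Your proof is correct and takes essentially the same route as the paper: both produce an identity of the form $\sum_{i=1}^k \lambda_i (t+\alpha_i)^k = (\text{affine in } t)$ from the Vandermonde structure on $k$ distinct $\alpha_i\in F$, then absorb the scalar coefficients using $F=W(F,k)$. The only cosmetic difference is that the paper obtains the $\lambda_i$ explicitly as Vandermonde cofactors (expanding $V(t+\alpha_1,\dots,t+\alpha_k)$ along its last column to get $\sum_i (t+\alpha_i)^{k-1}/\beta_i=\gamma$, then ``integrating'' via $\mathrm{char}\,F\nmid k$), and substitutes $(P-\delta)/(\gamma k)$ for $t$ directly, whereas you solve the linear system abstractly and pass to several variables through Lemma~1(2) and the Remark.
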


\begin{proof}
The proof follows ideas from \cite{GaVa}.
Let $\alpha_1,\ldots,\alpha_k$ be distinct elements of $F$.
First notice that by formula (\ref{eq:vandermonde}), if $t$ is any transcendental element over $F$,
$V(\alpha_1,\ldots,\alpha_k)=V(t+\alpha_1,\ldots,t+\alpha_k)$.
By expanding the determinant $V(t+\alpha_1,\ldots,t+\alpha_k)$ along the last column we get
(a term marked $\check{x_i}$ means that it is omitted):
\begin{align*}
V(\alpha_1,\ldots,\alpha_k)  &= V(t+\alpha_1,\ldots,t+\alpha_k) \\
   &= \pm \sum_{i=1}^k (-1)^i(t+\alpha_i)^{k-1} V(t+\alpha_1,\ldots,{\overbrace{t+ \alpha_i}^\vee},\ldots,t+\alpha_k) \\
   &= \pm \sum_{i=1}^k (-1)^i(t+\alpha_i)^{k-1} V(\alpha_1,\ldots,\check{\alpha_i},\ldots,\alpha_k). \\
\end{align*}
The constant $\gamma = V(\alpha_1,\ldots,\alpha_k)$ is non-zero 
since the $\alpha_i$ are distinct elements of $F$.
We write
$$
\sum_{i=1}^k \frac{(t+\alpha_i)^{k-1}}{\beta_i} = \gamma,
$$
where $\beta_i$ are non-zero constants in $F$.
This formula proves that the function 
$C(t) = \sum_{i=1}^k \frac{(t+\alpha_i)^k}{\beta_i}-\gamma k t$
has an identically null derivative; since the characteristic of $F$ does not divide $k$,
it implies that $C(t)$ is a constant. So that, for some $\delta \in F$:
\begin{equation}
\label{eq:eqsumk}
\sum_{i=1}^k \frac{(t+\alpha_i)^k}{\beta_i} = \gamma k t + \delta.
\end{equation}

Let $P(x_1,\ldots, x_n)\in F[x_1,\ldots, x_n]$. By substitution of  
$t$ by $(P-\delta)/(\gamma k)$ in equality (\ref{eq:eqsumk}) we get
$P = \sum_{i=1}^{k}\frac{(P-\delta+\alpha_{i}\gamma k)^{k}}{\beta_{i}(\gamma k)^{k}}$.
But by assumption $1/\beta_{i}(\gamma k)^{k}$ is a sum of $k$th powers of elements of $F$.
So that $P(x_1,\ldots, x_n)$ is also a sum of $k$th powers of elements of $F[x_1,\ldots, x_n]$.
\end{proof}

\begin{corollary}
\label{cor:vandermonde}
Let $F$ have more than $k$ distinct elements such that its characteristic does not divide $k$. 
Every polynomial
 $P(x_1,\ldots, x_n)\in F[x_1,\ldots, x_n]$ of degree $d$ can be written as a sum
$$P(x_1,\ldots, x_n) = \delta_{1} Q_{1}(x_1,\ldots, x_n)^{k} +\cdots + \delta_k Q_{k}(x_1,\ldots, x_n)^{k},$$
where $\delta_1,\ldots,\delta_k \in F$ and  $Q_1,\ldots,Q_k$ are polynomials 
in $F[x_1,\ldots,x_n]$ such that $\deg Q_{i}^{k} \le kd$. 
If moreover each element of $F$ is a sum of $w_{F}(k)$ $k$th powers, then
$$P(x_1,\ldots, x_n) = Q_{1}(x_1,\ldots, x_n)^{k}+\cdots + Q_s(x_1,\ldots, x_n)^{k}$$
where $Q_1,\ldots,Q_s \in F[x_1,\ldots, x_n]$ such that $\deg {Q_{i}}^{k} \le kd$
for some $s \le k \cdot w_{F}(k)$.
\end{corollary}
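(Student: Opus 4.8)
The plan is to reuse the computation from the proof of Proposition~\ref{prop:vandermonde}, but this time keeping track of degrees. Recall that there we fixed $k$ distinct elements $\alpha_1,\dots,\alpha_k$ of $F$ and produced non-zero constants $\beta_1,\dots,\beta_k$ and $\delta\in F$, together with $\gamma = V(\alpha_1,\dots,\alpha_k)\neq 0$, satisfying the polynomial identity~(\ref{eq:eqsumk}):
$$\sum_{i=1}^k \frac{(t+\alpha_i)^k}{\beta_i} = \gamma k t + \delta .$$
Since the characteristic of $F$ does not divide $k$, the scalar $\gamma k$ is invertible in $F$, so this identity is preserved under the substitution $t \mapsto (P-\delta)/(\gamma k)$ for any $P\in F[x_1,\dots,x_n]$.

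Performing that substitution yields
$$P = \sum_{i=1}^k \frac{\bigl(P - \delta + \alpha_i\gamma k\bigr)^k}{\beta_i(\gamma k)^k},$$
which is already a representation of the first claimed type: set $\delta_i = 1/\bigl(\beta_i(\gamma k)^k\bigr)\in F$ and $Q_i = P - \delta + \alpha_i\gamma k \in F[x_1,\dots,x_n]$. Because the terms added to $P$ are constants, $\deg Q_i \le \deg P = d$, hence $\deg Q_i^k \le kd$, and there are exactly $k$ summands. (When $d=0$ the statement is immediate from $P\in F = W(F,k)$, so we may assume $d\ge 1$.)

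For the refined statement, use the hypothesis that every element of $F$ is a sum of $w_F(k)$ $k$th powers of elements of $F$ to write each $\delta_i = \sum_{j=1}^{w_F(k)} c_{ij}^k$ with $c_{ij}\in F$. Then $\delta_i Q_i^k = \sum_{j=1}^{w_F(k)} (c_{ij}Q_i)^k$, and substituting into the previous display expresses $P$ as a sum of at most $k\cdot w_F(k)$ $k$th powers of the polynomials $c_{ij}Q_i$, each of degree $\le d$, so each power has degree $\le kd$; discarding any summand with $c_{ij}=0$ only lowers the count. I do not expect a genuine obstacle here: the corollary is purely a matter of bookkeeping on top of Proposition~\ref{prop:vandermonde}, the only points to watch being the invertibility of $\gamma k$ (already guaranteed by the characteristic condition) and the harmless degenerate case $d=0$.
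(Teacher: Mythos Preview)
Your argument is correct and is exactly the approach the paper intends: the corollary is obtained by substituting $t\mapsto (P-\delta)/(\gamma k)$ into formula~(\ref{eq:eqsumk}), reading off the degree bound $\deg Q_i\le d$, and then absorbing each scalar $\delta_i$ into $w_F(k)$ many $k$th powers. The parenthetical about $d=0$ is unnecessary (the main substitution already handles constants), but otherwise this matches the paper's one-line proof verbatim in spirit.
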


\begin{proof}
It comes from formula (\ref{eq:eqsumk}) and the discussion below it.
\end{proof}

\medskip

\emph{In the sequel, we consider polynomials in two variables.}

\subsection{Low degree, many terms}
\begin{proposition}
\label{prop:strict}
Let $F$ be a field with more than $k$ distinct elements such that its characteristic does not divide $k$.
Every polynomial $P \in F[x,y]$ of degree $d$ admits a decomposition:
$$P(x,y) = \delta_1 Q_1(x,y)^k+\cdots+\delta_s Q_s(x,y)^k,$$
where $\delta_1,\ldots,\delta_s \in F$ and  $Q_1,\ldots,Q_s$ are polynomials 
in $F[x,y]$ such that $\deg Q_i^k \le d + 2(k-1)^2$ and $s \le k\cdot \frac{(d+1)(d+2)}{2}$.

If moreover each element of $F$ is a sum of $k$th powers then $P$ admits a decomposition:
$$P(x,y) = Q_1(x,y)^k+\cdots+Q_s(x,y)^k,$$
where $Q_1,\ldots,Q_s \in F[x,y]$ 
with $\deg Q_i^k \le d + 2(k-1)^2$ and $s \le k w_F(k)\frac{(d+1)(d+2)}{2}$.
\end{proposition}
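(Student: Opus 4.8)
The plan is to reduce the two–variable statement to the one–variable identity of Proposition~\ref{prop:vandermonde} applied monomial by monomial. First I would write $P(x,y)=\sum_{i+j\le d}c_{ij}x^iy^j$, so $P$ is a sum of at most $\binom{d+2}{2}=\frac{(d+1)(d+2)}{2}$ terms of the form $c\,x^iy^j$ with $i+j\le d$. It therefore suffices to decompose a single monomial $c\,x^iy^j$ as a sum of $k$ terms $\delta_\ell Q_\ell^k$ with $\deg Q_\ell^k\le d+2(k-1)^2$, and then collect: the total number of terms is at most $k\cdot\frac{(d+1)(d+2)}{2}$, and if each $\delta_\ell$ (times the coefficient $c$) is a sum of $w_F(k)$ $k$th powers one replaces each $\delta_\ell Q_\ell^k$ by $w_F(k)$ genuine $k$th powers, giving the stated second bound.

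The core step is the single–monomial decomposition. Here I would use the identity~(\ref{eq:eqsumk}): choosing distinct $\alpha_1,\dots,\alpha_k\in F$ (possible since $|F|>k$) and the associated nonzero $\beta_i,\gamma$, substituting $t\mapsto u$ for an arbitrary $u\in F[x,y]$ gives
\begin{equation*}
\sum_{i=1}^k\frac{(u+\alpha_i)^k}{\beta_i}=\gamma k\,u+\delta,
\end{equation*}
hence $u=\sum_{i=1}^k\frac{(u+\alpha_i)^k}{\beta_i\gamma k}-\frac{\delta}{\gamma k}$, expressing any $u\in F[x,y]$ — in particular any scalar multiple of a monomial — as a combination $\sum_{\ell}\delta_\ell Q_\ell^k$ of $k$ pure powers plus a constant; the constant is itself a scalar, absorbed into the $\delta_\ell$-part (or handled separately as one more scalar which is a sum of $k$th powers). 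The degree control is the delicate point: a naive application to $u=c\,x^iy^j$ produces $Q_\ell$ of degree $i+j\le d$, giving $\deg Q_\ell^k\le kd$, which is too large. The trick, as the bound $d+2(k-1)^2$ suggests, is to apply the identity separately to the $x$–part and the $y$–part: write $x^i$ and $y^j$ each as such a sum, and then note that a product of two $k$th powers is a $k$th power, so $x^iy^j=\big(\text{$k$th power in }x\big)\cdot\big(\text{$k$th power in }y\big)$ expands, via bilinearity of the two sums, into $k^2$ terms each of the form $\delta\,R(x)^kS(y)^k=\delta\,(R(x)S(y))^k$. I expect the actual construction to be a bit sharper: to keep $\deg Q_\ell^k$ down to $d+2(k-1)^2$ one should split the exponents as evenly as possible, e.g. write $x^i=x^{i-r}\cdot x^r$ with $0\le r<k$ chosen so that $i\equiv r\pmod k$, apply~(\ref{eq:eqsumk}) only to the ``bulk'' part $x^{(i-r)/k}$ raised appropriately, and carry the small residual factor $x^r$ (degree $<k$) inside; doing this for both variables contributes the $2(k-1)^2$ overhead and at most $k^2$ terms per monomial — actually $k$ if one is careful to combine, matching the claimed $s\le k\cdot\frac{(d+1)(d+2)}{2}$.

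The main obstacle will be pinning down exactly this degree bookkeeping so that the per–monomial term count is $k$ (not $k^2$) while $\deg Q_\ell^k\le d+2(k-1)^2$ holds uniformly over all monomials $x^iy^j$ with $i+j\le d$, including the boundary cases where $i$ or $j$ is small or divisible by $k$. I would handle this by the Euclidean–division normalization above: write $i=k q_1+r_1$, $j=k q_2+r_2$ with $0\le r_1,r_2\le k-1$, so $x^iy^j=(x^{q_1}y^{q_2})^k\cdot x^{r_1}y^{r_2}$; apply the identity~(\ref{eq:eqsumk}) to $u=x^{q_1}y^{q_2}$ (degree $q_1+q_2\le d/k$) to get $u=\sum_{\ell=1}^k \mu_\ell(u+\alpha_\ell)^k+\text{const}$, then multiply through by $x^{r_1}y^{r_2}$ and use $(u+\alpha_\ell)^k\cdot x^{r_1}y^{r_2}$; since $x^{r_1}y^{r_2}$ is not itself a $k$th power one instead applies the identity to $u=x^{q_1}y^{q_2}$ after first clearing denominators differently — concretely, the cleanest route is to write each of $x^i$, $y^j$ via~(\ref{eq:eqsumk}) as $\sum_{\ell}\nu_\ell A_\ell(x)^k$ (resp.\ $\sum_m \nu'_m B_m(y)^k$) with $\deg A_\ell\le \lceil i/k\rceil + (k-1)$ so $\deg A_\ell^k\le i + k(k-1)$, likewise for $B_m$, then $x^iy^j=\sum_{\ell,m}\nu_\ell\nu'_m (A_\ell B_m)^k$ with $\deg(A_\ell B_m)^k\le i+j+2k(k-1)\le d+2(k-1)^2$ after optimizing constants, giving $k^2$ terms per monomial; the reduction from $k^2$ to $k$ comes from noticing that in the one–dimensional expansion of $x^i$ the "$-\gamma k t$"-structure lets one reuse a common family, collapsing the double sum. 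I will carry out precisely this optimization and check all edge cases; everything else is the routine substitution already established in Proposition~\ref{prop:vandermonde} and Corollary~\ref{cor:vandermonde}.
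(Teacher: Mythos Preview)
Your overall architecture is right: decompose $P$ monomial by monomial, use the Euclidean divisions $i=kq_1+r_1$, $j=kq_2+r_2$ to factor $x^iy^j=(x^{q_1}y^{q_2})^k\cdot x^{r_1}y^{r_2}$, and invoke Corollary~\ref{cor:vandermonde}. But you apply the identity to the wrong factor. You try applying it to $u=x^{q_1}y^{q_2}$ and then get stuck because the leftover $x^{r_1}y^{r_2}$ is not a $k$th power; you then fall back to treating $x^i$ and $y^j$ separately, which yields $k^2$ terms per monomial, and your claimed ``collapse'' from $k^2$ to $k$ via the $-\gamma k t$ structure is not a real argument.

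The fix is to reverse the roles: the factor $(x^{q_1}y^{q_2})^k$ is \emph{already} a $k$th power, so leave it alone and apply Corollary~\ref{cor:vandermonde} to the small residual $x^{r_1}y^{r_2}$ (a polynomial of degree $r_1+r_2\le 2(k-1)$). This gives
\[
x^{r_1}y^{r_2}=\sum_{\ell=1}^{k}\delta_\ell\, R_\ell(x,y)^k,\qquad \deg R_\ell^k\le k(r_1+r_2),
\]
and hence
\[
x^iy^j=\sum_{\ell=1}^{k}\delta_\ell\,\bigl(x^{q_1}y^{q_2}R_\ell(x,y)\bigr)^k,
\]
which is exactly $k$ terms. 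The degree is $k(q_1+q_2)+k(r_1+r_2)=i+j+(k-1)(r_1+r_2)\le d+2(k-1)^2$, as required. This is precisely the paper's proof; everything else in your write-up (monomial count, passage to the second statement via $w_F(k)$) is fine.
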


\begin{proof}
Let $P(x,y) = \sum a_{i,j} x^iy^j$. We make the Euclidean divisions:
$i=pk+a$ and $j=qk+b$ with $0\le a,b < k$. Each monomial $x^iy^j$
can now be written $x^iy^j = (x^py^q)^k \cdot x^ay^b$.
By Corollary \ref{cor:vandermonde}, $x^a y^b$ can be written
$x^ay^b = \delta_1 Q_1(x,y)^k+\cdots+\delta_k Q_k(x,y)^k$
with $\delta_1,\ldots,\delta_k \in F$, $Q_1,\ldots,Q_k \in F[x,y]$
and $\deg Q_i \le \deg(x^ay^b)$,
so that 
$$x^iy^j  = \delta_1 (x^py^qQ_1(x,y))^k + \cdots + \delta_k (x^py^qQ_1(x,y))^k.$$
Moreover $\deg ((x^py^qQ_i(x,y))^k) = k(p+q+\deg Q_i) \le kp + kq +ka+kb
= i+j + (k-1)(a+b) \le i+j + 2(k-1)^2 \le d + 2(k-1)^2$.

As $\deg P = d$ the number of monomials $x^iy^j$ is less or equal than $\frac{(d+1)(d+2)}{2}$,
so that $P$ admits a decomposition $P(x,y) = \delta_1 Q_1(x,y)^k+\cdots+\delta_s Q_s(x,y)^k$
with $\deg Q_i^k \le d + 2(k-1)^2$ and $s \le k\frac{(d+1)(d+2)}{2}$.
Thus we can find a decomposition $P(x,y) = Q_1(x,y)^k+\cdots+Q_s(x,y)^k$
for some $s \le k w_F(k) \frac{(d+1)(d+2)}{2}$.
\end{proof}

\subsection{Medium degree, few terms}

We improve this method to get fewer terms in the sum but the degree of each term is higher.
\begin{proposition}
\label{prop:strict2}
Let $F$ be a field with more than $k$ elements, such that its characteristic does 
not divide $k$ and each element of $F$ is a sum of $k$th powers.
Any $P \in F[x,y]$ $P$ admits a decomposition:
$$P(x,y) = Q_1(x,y)^k+\cdots+Q_s(x,y)^k,$$
where  $Q_1,\ldots,Q_s$ are polynomials in $F[x,y]$
with $\deg Q_i^k \le 2\deg P + 4k^2$ and $s \le k^2(2k-1)w_F(k)$.
\end{proposition}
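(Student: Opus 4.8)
The plan is to refine the diagonal argument of Proposition \ref{prop:strict}: instead of decomposing each monomial of $P$ separately, I group the monomials of $P$ by their residues modulo $k$ in both variables and treat each of the resulting $k^2$ blocks with a single Vandermonde-type step, so that the number of $k$th powers no longer depends on $\deg P$.

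Write $P=\sum a_{i,j}x^iy^j$ and perform the Euclidean divisions $i=kp+a$, $j=kq+b$ with $0\le a,b<k$. Regrouping the monomials according to the pair $(a,b)$ gives
$$P(x,y)=\sum_{a,b=0}^{k-1}x^ay^b\,G_{a,b}(x,y),\qquad G_{a,b}(x,y):=\sum_{p,q}a_{kp+a,\,kq+b}\,(x^py^q)^k ,$$
so $P$ is split into $k^2$ blocks. Each $G_{a,b}$ lies in the subring $F[x^k,y^k]$, say $G_{a,b}=H_{a,b}(x^k,y^k)$ with $H_{a,b}\in F[X,Y]$ of total degree at most $m:=\lfloor\deg P/k\rfloor$; consequently $\deg G_{a,b}\le km\le\deg P$. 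The $k^2$ blocks account for the factor $k^2$ in the statement.

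Now fix a block $(a,b)$ and treat its two factors. By Corollary \ref{cor:vandermonde} the monomial $x^ay^b$ is an $F$-linear combination of at most $k$ $k$th powers of polynomials of degree at most $a+b\le 2(k-1)$. For the bulk factor $G_{a,b}=H_{a,b}(x^k,y^k)$ the key point -- and the heart of the proof -- is to represent it as an $F$-linear combination of at most $2k-1$ $k$th powers whose bases have degree at most $2m+O(k)$, rather than the degree $\approx km$ one gets by applying Corollary \ref{cor:vandermonde} directly to $G_{a,b}$. I would do this by substituting a suitable polynomial into an identity obtained by column-expanding a Vandermonde determinant, in the spirit of the proof of Proposition \ref{prop:vandermonde}, but arranged so as to use that $G_{a,b}$ involves only $x^k$ and $y^k$, so that morally it behaves as a polynomial of degree $m$ rather than $km$; this arrangement is what produces the count $2k-1$. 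Multiplying the two decompositions and using $A^kB^k=(AB)^k$, the block $x^ay^b\,G_{a,b}$ becomes an $F$-linear combination of at most $k(2k-1)$ $k$th powers of polynomials of degree at most $2(k-1)+2m+O(k)$, hence of degree at most $2km+O(k^2)\le 2\deg P+4k^2$ after taking $k$th powers.

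Finally, summing over the $k^2$ blocks and replacing each $F$-coefficient by a sum of $w_F(k)$ $k$th powers gives $P=Q_1^k+\cdots+Q_s^k$ with $\deg Q_i^k\le 2\deg P+4k^2$ and $s\le k^2(2k-1)w_F(k)$. The main obstacle is exactly the middle step: circumventing the factor-$k$ loss in degree that Corollary \ref{cor:vandermonde} incurs when passing from a polynomial to its $k$th-power representation, by exploiting the sparsity of $G_{a,b}$ together with a Vandermonde identity carrying enough ($2k-1$) terms. Once that decomposition of $G_{a,b}$ is available, the remaining work -- keeping track of the constants $\alpha_i,\beta_i,\gamma$, of the factor $w_F(k)$, and of the error term $+4k^2$ -- is routine.
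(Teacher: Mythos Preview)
Your proposal has two genuine problems, and the approach is quite different from the paper's.

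\textbf{The central step is missing.} You correctly identify that the whole argument hinges on writing $G_{a,b}=H_{a,b}(x^k,y^k)$, with $\deg H_{a,b}\le m$, as an $F$-linear combination of $2k-1$ $k$th powers whose bases have degree $\le 2m+O(k)$. But you do not prove this, and your description (``substituting a suitable polynomial into an identity obtained by column-expanding a Vandermonde determinant \ldots\ this arrangement is what produces the count $2k-1$'') is not an argument. The Vandermonde identity (\ref{eq:eqsumk}) has $k$ terms and is one-variable; applying it to $H_{a,b}$ gives bases of degree $m$, hence $k$th powers of degree $km$, which is exactly the loss you want to avoid. There is no evident mechanism by which ``sparsity'' of $G_{a,b}$ in $F[x^k,y^k]$ converts this into a $(2k-1)$-term representation with base degree $\approx 2m$. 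Until this step is actually produced, the proof is not complete.

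\textbf{The count is off by a factor of $k$.} Even granting the middle step, each block $x^ay^bG_{a,b}$ uses $k\cdot(2k-1)$ $k$th powers with $F$-coefficients; summing over the $k^2$ blocks gives $k^3(2k-1)$ terms, and then the $w_F(k)$ factor yields $s\le k^3(2k-1)w_F(k)$, not $k^2(2k-1)w_F(k)$.

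\textbf{What the paper does instead.} The paper's proof is geometric and avoids both issues. Let $d$ be the least multiple of $2k^2$ with $d\ge\deg P$; then the Newton polygon of $P$ sits in the triangle of side $d$. Cover this triangle by the $k(2k-1)$ translates (by steps of $d/(2k)$) of the small triangle of side $d/k$; concretely, $P=\sum_{0\le i+j\le 2k-2}x^{i d/(2k)}y^{j d/(2k)}P_{i,j}$ with $\deg P_{i,j}\le d/k$. Since $2k^2\mid d$, each monomial $x^{i d/(2k)}y^{j d/(2k)}$ is a $k$th power, and Corollary~\ref{cor:vandermonde} applied directly to each $P_{i,j}$ gives $kw_F(k)$ $k$th powers of degree $\le k\cdot(d/k)=d$. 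This yields $k(2k-1)\cdot kw_F(k)=k^2(2k-1)w_F(k)$ terms of degree $<2d\le 2\deg P+4k^2$. Note that the number $2k-1$ arises from the count $\#\{(i,j):i+j\le 2k-2\}=k(2k-1)$ of tiles, not from any Vandermonde manipulation, and no intermediate step needs a nonstandard $(2k-1)$-term identity.
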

Observe that the bound for $s$ does not depend on the degree of the polynomial $P$.

\mybeginproof
\begin{wrapfigure}{}{0pt}
  \includegraphics[scale=1]{fig_waring.2}
\end{wrapfigure}
Let $d$ be the least multiple of $2k^2$ such that $d \ge \deg P$.
The Newton polygon of $P$ is included in the triangle $ABC$ with
$A(0,0)$, $B(0,d)$, $C(d,0)$.

We cover this triangle $ABC$ by $k(2k-1)$ small
triangles that are translations (by $\frac{d}{2k}$-units) of $A'B'C'$ with $A'(0,0)$, $B'(0,\frac dk)$, $C'(\frac dk,0)$.
This covering means that we can write $P(x,y)$ as a sum of $k(2k-1)$ polynomials of the form
$x^{i\frac{d}{2k}}y^{j\frac{d}{2k}}P_{i,j}(x,y)$ with $\deg P_{i,j} \le \frac dk$ and $0\le i+j \le 2k-2$
(so that $\deg x^{i\frac{d}{2k}}y^{j\frac{d}{2k}} < d$).
As $2k^2$ divides $d$ then $x^{i\frac{d}{2k}}y^{j\frac{d}{2k}}$ is a $k$th power.
Furthermore, by Corollary \ref{cor:vandermonde}, we can write each $P_{i,j}$ as a sum
of $k w_F(k)$ powers, each power being of degree at most $k \frac dk = d$.
Hence we get a decomposition $P(x,y)=Q_1(x,y)^k+\cdots+Q_s(x,y)^k$
with $s\le k^2(2k-1)w_F(k)$ terms and $\deg Q_i^k < 2d$.
\myendproof

\section{Approximate root}
\label{sec:root}

\begin{wrapfigure}{r}{0pt}
  \includegraphics[scale=1]{fig_waring.3}
\end{wrapfigure}
In this section $F$ is a field whose characteristic does not divide $k$.
Let $P\in F[x,y]$ be a polynomial that verifies the following conditions:
$\deg P \le d$,  $\deg_x P < m$.
So that the Newton polygon $\Gamma(P)$ of $P$ is
(included in) the following polygon $\bar \Gamma(P)$ (whose vertices are
$(0,0)$, $(m,0)$, $(m,n)$, $(0,d)$).
We set $n = d-m$ and we suppose that
$k | m, \quad k |n, \quad k | d.$
We will look for a $Q \in F[x,y]$ such that $\deg Q \le \frac d k,  \deg_x Q \le \frac m k$,
so that $\Gamma(Q^k) \subset \bar \Gamma(P)$. In fact the Newton polygon of $Q$ is homothetic to the one
of $P$ with a ratio $\frac 1k$.

\begin{proposition}
\label{prop:root}
There exists a unique $Q(x,y) \in F[x,y]$, monic in $x$, such that $P+x^my^n - Q^k$
has no monomial $x^iy^j$ with $i \ge m-\frac mk$ and $j \ge n - \frac nk$.
That is to say, the Newton polygon of $P + x^my^n -Q^k$ is (included in):
\begin{center}
\begin{figure}[H]
  \includegraphics[scale=1]{fig_waring.4}
\end{figure}
\end{center}
\end{proposition}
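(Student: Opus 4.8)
The plan is to build $Q$ coefficient by coefficient, viewing everything as polynomials in $x$ with coefficients in $F[y]$ (or better, as bivariate polynomials organized by decreasing $x$-degree), and to show that the requirement ``$P+x^my^n-Q^k$ has no monomial $x^iy^j$ with $i\ge m-\tfrac mk$ and $j\ge n-\tfrac nk$'' determines $Q$ uniquely. Write $Q = \sum_{\ell=0}^{m/k} c_\ell(y)\, x^{m/k-\ell}$ with $c_0 = 1$ (monic in $x$) and $\deg c_\ell \le \tfrac dk$, organized so that the ``leading part'' of $Q$ in the appropriate weighted sense corresponds to the edge of $\bar\Gamma(P)$ from $(m,0)$ to $(0,d)$. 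Then $Q^k$ has leading term $x^m$ (coefficient $1$), and the next terms of $Q^k$, in order of decreasing contribution toward the corner $(m,n)$ of $\bar\Gamma(P)$, are of the form $k x^{m-?}(\text{something linear in the unknown }c_\ell) + (\text{already-determined stuff})$. Because the characteristic does not divide $k$, the factor $k$ is invertible, so at each stage the relevant coefficient of $Q$ is forced and can be solved for; the added term $x^my^n$ is there precisely to supply the missing ``corner'' monomial $x^my^n$ that $Q^k$ alone could not produce since $Q$ is monic with $x^m$ as its top term but needs the full square $\bar\Gamma(P)$ worth of room.

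More concretely, I would set up a triangular (Newton-type) induction. Index the monomials $x^iy^j$ with $m-\tfrac mk\le i\le m$ and $n-\tfrac nk\le j \le n$ that must be killed. Order them by, say, the value $k(m-i)+ (\text{something})$ — equivalently peel off ``diagonals'' of the small square $[m-\tfrac mk,m]\times[n-\tfrac nk,n]$ starting from the corner $(m,n)$. Matching the coefficient of $x^my^n$ in $P+x^my^n-Q^k$ to zero determines nothing new about $Q$ (it is automatically $1+0-1=0$ from the monic leading term $x^m$ and the extra $x^my^n$... wait, one must be careful: $Q^k$ contributes $x^m$ but not $x^my^n$ unless $n=0$, so the coefficient of $x^my^n$ is $0$ (from $P$, since $\deg_x P<m$) $+1$ (from the added term) $-0$; to make this vanish we need $n=0$ — so the correct reading is that the added $x^my^n$ is exactly the obstruction and the Newton polygon in the picture of $P+x^my^n-Q^k$ is the one missing the top-right square, meaning that monomial is allowed to survive). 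Re-reading the statement: the conclusion is that $P+x^my^n-Q^k$ has \emph{no} monomial with $i\ge m-\tfrac mk$ \emph{and} $j\ge n-\tfrac nk$ — so the surviving polygon is $\bar\Gamma(P)$ with the top-right square $[m-\tfrac mk,m]\times[n-\tfrac nk,n]$ removed, and the $x^my^n$ term must also be cancelled. That means $Q^k$ must produce $x^my^n$, which it does through the product of $c_0=1$ (the $x^{m/k}$ part) raised appropriately against the coefficient $c_{?}$ carrying $y^n$: indeed $Q$ contains a term $c_\ell(y)x^{m/k-\ell}$ whose $k$-th power cross-term $k\,x^{m-?}\cdot(\text{coeff})$ reaches $x^my^n$ only for $\ell$ such that the $x$-degree is right and $c_\ell$ carries $y^{n}$. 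So the induction is: process the unknown coefficients of $Q$ in an order (e.g.\ lexicographic by (decreasing $x$-exponent, then decreasing $y$-degree contribution)) such that when it is time to determine a given coefficient, the corresponding monomial of $P+x^my^n-Q^k$ that we force to zero involves that coefficient linearly with invertible factor $k$ plus already-known data.

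The key steps, in order: (1) Fix the ansatz $Q=\sum_{\ell,\text{ appropriate}} q_{\ell,r}\,x^{m/k-\ell}y^{r}$ with $q$ normalized so the top monomial is $x^{m/k}$; note $\deg Q\le \tfrac dk$, $\deg_x Q\le \tfrac mk$ forces $\Gamma(Q^k)\subset\bar\Gamma(P)$ as already observed in the excerpt. (2) Expand $Q^k$ via the multinomial theorem and identify, for each ``forbidden'' monomial $x^iy^j$ (those with $i\ge m-\tfrac mk$, $j\ge n-\tfrac nk$, plus the corner $x^my^n$), the coefficient in $P+x^my^n-Q^k$; show it has the shape $k\cdot(\text{new unknown}) + (\text{polynomial in previously-solved unknowns and coefficients of }P)$, where the ``new unknown'' ranges over all unknowns exactly once as the monomial ranges over the forbidden set — i.e.\ establish a bijection between forbidden monomials and coefficients of $Q$ that respects a triangular order. (3) Since $\mathrm{char}\,F\nmid k$, solve successively: existence and uniqueness of each $q_{\ell,r}$ follow immediately, giving existence and uniqueness of $Q$, and the construction guarantees $Q$ is monic in $x$. (4) Verify the degree/support bookkeeping: every monomial of $Q^k$ lies in $\bar\Gamma(P)$, and the added $x^my^n$ plus $P$ have all their mass inside $\bar\Gamma(P)$ too, so the difference is supported in $\bar\Gamma(P)$; combined with (2) killing the top-right square, the Newton polygon of $P+x^my^n-Q^k$ is as pictured.

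The main obstacle I expect is step (2): setting up the right partial order on the coefficients of $Q$ and on the forbidden monomials so that the linear system is genuinely triangular, and checking the counting (number of forbidden monomials equals number of free coefficients of $Q$, namely roughly $\tfrac mk\cdot\tfrac nk$ adjusted for the monic normalization and for the shape constraints $\deg Q\le\tfrac dk$). One must be slightly careful that a coefficient $q_{\ell,r}$ of $Q$ that we would ``like'' to determine from a forbidden monomial is not forced by the degree constraint $\deg Q\le\tfrac dk$ to be zero already — i.e.\ that the counts match on the nose. The cleanest way is to organize the expansion so that the coefficient of $x^{m-a}y^{b}$ in $Q^k$, for $0\le a\le \tfrac mk$, is $k\,[x^{m/k-a}y^{b}]\,Q + (\text{terms from products of at least two non-leading factors})$, the latter involving only $[x^{m/k-a'}]\,Q$ with $a'<a$; then the induction on $a$ (and within fixed $a$, on $b$ decreasing from $n$) runs cleanly, and one reads off $q_{\ell,r}$ for $\ell=a$, $r=b$ uniquely. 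The corner term $x^my^n$ being cancelled is then just the instance $a=0$, $b=n$ after accounting for the extra $+x^my^n$, which fixes the coefficient of $x^{m/k}y^{n}$ in $Q$ — consistent with $\deg Q\le\tfrac dk$ since $\tfrac mk+n$ could exceed $\tfrac dk$, so in fact the picture forces a specific relationship; I would double-check that $n\le \tfrac dk$, equivalently $d-m\le\tfrac dk$, which need not hold in general, suggesting the correct normalization carries $y$-degrees only up to $\tfrac dk-(\tfrac mk-\ell)$ and that the forbidden-monomial/coefficient bijection is with the trapezoidal region cut out by $\bar\Gamma(Q^k)$ — reconciling this geometry with the stated ``square removed'' picture is precisely the delicate bookkeeping of step (2).
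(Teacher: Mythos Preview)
Your overall plan --- write $Q$ as a polynomial in $x$ with coefficients in $F[y]$, expand $Q^k$, and solve a triangular system using that $k$ is invertible --- is exactly the paper's approach. But you get tangled at one specific point, and you correctly sense the tangle without untying it.

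The issue is your normalization $c_0=1$. With that choice, $Q^k$ has $x$-leading term $x^m$, not $x^my^n$, and (as you notice) there is no way for $Q^k$ to produce the monomial $x^my^n$ at all when $n>0$; your attempted fix of putting a term $x^{m/k}y^n$ into $Q$ violates $\deg Q\le d/k$ whenever $m<d$, as you again notice. The paper's resolution is simply to take the leading coefficient (in $x$) of $Q$ to be $b_0(y)=y^{n/k}$, so that the top term of $Q$ is $y^{n/k}x^{m/k}$ and hence $(Q^k)$ begins with $y^nx^m$, cancelling the added $x^my^n$ exactly. (So ``monic in $x$'' in the statement should be read as ``with prescribed leading $x$-coefficient $y^{n/k}$''; for $n=0$ this is literal monicity.) Once $b_0=y^{n/k}$ is in place, your worries evaporate: dividing by $b_0^{k-1}=y^{n-n/k}$ is the mechanism that picks out precisely the forbidden range $j\ge n-\tfrac nk$, and the degree constraint is met because one checks inductively $\deg b_\ell\le \tfrac nk+\ell$, giving total degree $\le \tfrac nk+\ell+\tfrac mk-\ell=\tfrac dk$.

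Two organizational points where the paper is cleaner than your sketch. First, rather than chasing individual monomial coefficients $q_{\ell,r}$, the paper keeps the $y$-coefficients bundled as polynomials $b_\ell(y)\in F[y]$ and works modulo the equivalence $a(y)\bumpeq b(y)\iff \deg(a-b)<n-\tfrac nk$; this encodes the condition ``$j\ge n-\tfrac nk$'' in one stroke and makes the system
\[
a_\ell \;\bumpeq\; k\,b_0^{k-1}b_\ell \;+\; (\text{polynomial in }b_0,\ldots,b_{\ell-1})
\]
manifestly triangular in $\ell=0,1,\ldots,\tfrac mk$. Second, the region being killed is a trapezium, not a square: its slanted side lies on the line $i+j=d$, which is why the count of forbidden monomials matches the count of free coefficients of $Q$ (those $b_\ell$ with $\deg b_\ell\le \tfrac nk+\ell$) on the nose. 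Your step~(2) bijection becomes automatic once you work with the $b_\ell(y)$ modulo $\bumpeq$ rather than monomial by monomial.
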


It means that with two $k$th powers ($x^my^n$ and $Q^k$) we ``cancel'' the trapezium $T$
(defined by the vertices $(m,n)$, $(m,n-\frac{n}{k})$, $(m-\frac{m}{k},n-\frac{n}{k})$,$(m-\frac{m}{k},n + \frac{d}{k} -\frac{n}{k})$).
This procedure is similar to the computation of the approximate $k$th root of a one variable polynomial, see \cite{Bo}.
The proof is sketched into the following picture:
{\center
\begin{figure}[H]
  \includegraphics[scale=0.8]{fig_waring.5}
\end{figure}
}
Morally, the coefficients of $Q$ provide a set of unknowns, which is chosen in order that
$Q^k$ and $P$ can be identified into the trapezium area $(T)$.

\begin{proof}
We write $P$ as the sum $P=P_1+P_2$ corresponding to the decomposition into two areas
of $\bar \Gamma(P) = T \cup (\bar\Gamma(P)\setminus T)$:
we write $P_1$ as a polynomial in $x$ whose coefficients are in $F[y]$ so that
$P_1(x,y) =  a_1(y) x^{m-1}+\cdots + a_{\frac{m}{k}}(y) x^{m-\frac{m}{k}}$
with $\deg a_i(y) \le n+i$ and $\val a_i(y) \ge n - \frac nk$.
We denote by $\val$ the $y$-adic valuation: 
$\val \sum \alpha_i y^i = \min \{ i  \mid \alpha_i \neq 0\}$.

We set $P'_1(x,y) = y^n x^m+ P_1(x,y)$ and $a_0(y) = y^n$.
Notice that we have added a $k$th power since $k|m$ and $k|n$.

We also write $Q(x,y)$ as a polynomial in $x$ with coefficients in $F[y]$:
$Q(x,y) = b_0(y) x^{\frac mk} + b_1(y) x^{\frac mk -1} + \cdots + b_{\frac{m}{k}}(y)$.

We now identify the monomials of $P'_1(x,y) = x^my^n + P_1(x,y)$ with the monomials of $Q(x,y)^k$, in the trapezium $T$.
As we only want to identify the monomials of a sufficiently high degree we define the following equivalence:
$$a(y) \bumpeq b(y) \quad \text{if and only if} \quad \deg (a(y) - b(y)) < n - \frac nk.$$
It yields the following polynomial system of equations ($a_i(y)$ are data, and $b_i(y)$ unknowns):
{\small
\begin{equation*}
\label{eq:sys}\tag{$\mathcal{S}$}
\begin{cases}
  a_0 \bumpeq b_0^k \\
  a_1 \bumpeq k b_0^{k-1}b_1 \\
  a_2 \bumpeq k b_0^{k-1}b_2+ \binom{k}{2} b_0^{k-2}b_1^2 \\
  \vdots \\
  a_\ell \bumpeq k b_0^{k-1}b_\ell + {\hspace*{-1.5em} 
\displaystyle{\sum_{\substack{i_1+2i_2+\cdots+(\ell-1)i_{\ell-1}=\ell \\ i_0+i_1+i_2+\cdots + i_{\ell-1}=k}}
\hspace*{-1.5em} c_{i_1\ldots i_{\ell-1}} b_0^{i_0}b_1^{i_1}\cdots b_{\ell-1}^{i_{\ell-1}}}}, 
\qquad 1 \le \ell \le \frac m k, \\
\end{cases}
\end{equation*}
}
\hspace{-2ex}  
where the coefficients $c_{i_1\ldots i_{\ell-1}}$ are the multinomial coefficients defined by the following formula:
$$c_{i_1\ldots i_{\ell-1}} = \binom{k}{i_1,\ldots,i_{\ell-1}} = \frac{k!}{i_1! \cdots i_{\ell-1}!(k-i_1-\cdots-i_{\ell-1})!}.$$
The first equation has a solution $b_0(y) = y^{\frac nk}$.
Then, as $\val a_1(y) \ge n - \frac nk$, we have $b_1(y) = \frac 1k \frac{a_1(y)}{b_0(y)^{k-1}} \in F[y]$ ($k$ is invertible in $F$).
Next we compute $b_2(y)$,... by induction using the fact that system (\ref{eq:sys}) is triangular.
Suppose that $b_0(y),b_1(y),\ldots,b_{\ell-1}(y)$ have been found.
System (\ref{eq:sys}) provides the relation:
$$a_\ell \bumpeq k b_0^{k-1}b_\ell + \sum c_{i_1\ldots i_{\ell-1}} b_0^{i_0}b_1^{i_1}\cdots b_{\ell-1}^{i_{\ell-1}}.$$
As $b_0(y) = y^{\frac nk}$ it means that the polynomials
$ky^{n-\frac nk} b_\ell(y)$ and $a_\ell- \sum c_{i_1\ldots i_{\ell-1}} b_0^{i_0}b_1^{i_1}\cdots b_{\ell-1}^{i_{\ell-1}}$
have equal coefficients associated to monomials $y^i$ with $i\ge n-\frac nk$.
Whence $b_\ell(y)$ is uniquely determined.
We have proved that system (\ref{eq:sys}) has a unique solution $(b_0(y),b_1(y),\ldots,b_{\frac mk}(y))$.

Finally, we need to prove that $\deg b_i \le \frac nk + i$ for $0 \le i \le  \frac mk$.
We have $b_0(y) = y^{\frac nk}$, so that $\deg b_0 = \frac nk$
and $b_1(y) = \frac 1k \frac{a_1(y)}{\left(y^{\frac nk}\right)^{k-1}}$; 
thus, $\deg b_1 \le \deg a_1 - n + \frac nk \le n+1 -n +\frac nk = \frac nk +1$.
Then, by induction we get
\begin{align*}
  \deg b_0^{i_0}b_1^{i_1}\cdots b_{\ell-1}^{i_{\ell-1}} 
       &\le   i_0 \left(\frac nk+0\right)+i_1\left(\frac nk+1\right)+\cdots + i_\ell\left(\frac nk+\ell\right) \\
       &=     \frac nk (i_0+i_1+\cdots+i_\ell) + i_1+2i_2+\cdots+(\ell-1)i_{\ell-1} \\
       &=     \frac nk k + \ell \\
       &=     n + \ell .
\end{align*}
We also find  $\deg a_\ell \le n +\ell$ so that $\deg b_\ell \le \frac nk + \ell$.
\end{proof}

\section{Strict sum of $k$th powers}
\label{sec:strict}

This section is devoted to the proof of the main theorem:
\begin{theorem}
\label{th:sum}
Let $F$ be a field with more than $k$ elements, whose characteristic does not divide $k$, 
such that each element of $F$ can be written as a sum
of $w_F(k)$ $k$th powers of elements of $F$.
Each polynomial  $P(x,y) \in F[x,y]$ of degree  $d \ge 2k^4$ is the sum of $k$th powers
$$P(x,y) = Q_1(x,y)^k+\cdots+Q_s(x,y)^k,$$
of polynomials $Q_i\in F[x,y]$ with $\deg Q_i^k \le d + k^3$ and 
$s \le 2k^3\ln(\frac{d}{k}+1)\ln(2k) + 7k^4\ln(k) w_F(k)^2$.
\end{theorem}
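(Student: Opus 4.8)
The plan is to build the decomposition by peeling off, from the Newton polygon of $P$, successive trapezia using Proposition~\ref{prop:root}, so that at each stage we spend only two $k$th powers to kill a large region of the support, and we keep the total degree growth under control. More precisely, write $d' $ for the least multiple of $k^3$ with $d'\ge d$ (so $d'\le d+k^3$), and reduce to the case where the support of $P$ lies in the triangle with vertices $(0,0),(d',0),(0,d')$. First I would split $P$ according to the vertical line $x^{m}$ where $m=d'/2$ is a multiple of $k^2$: write $P = A(x,y)+x^{m}B(x,y)$ with $\deg_x A<m$, $\deg A\le d'$, and $\deg B\le d'-m=m$. The piece $x^{m}B$ is handled recursively (after noting $x^m$ is a $k$th power since $k\mid m$, actually $k^2\mid m$); the piece $A$ satisfies exactly the hypotheses of Proposition~\ref{prop:root} with the roles of $m,n,d$ there played by $m, d'-m, d'$ (all divisible by $k$). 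Applying Proposition~\ref{prop:root} to $A$ produces a monic $Q$ with $\deg Q\le d'/k$ such that $A+x^{m}y^{n}-Q^{k}$ has its Newton polygon confined to the region obtained by removing the trapezium $T$ — in particular its $x$-degree has dropped below $m-m/k$ and the part near the top edge has been cleared. So $A = Q^k - x^m y^n + R$ where $R$ is supported strictly below the line $x^{m-m/k}$ in a region of the same triangular shape but scaled; both $R$ and the remaining pieces have degree at most $d'$ and $x$-degree reduced by a factor $(1-1/k)$, so iterating the ``cut at $x$-degree $m$'' step drives the $x$-degree down geometrically, reaching $x$-degree $0$ (i.e.\ a polynomial in $y$ alone, then handled by Corollary~\ref{cor:vandermonde}) after $O(k\ln(d/k))$ rounds, and symmetrically for the $y$-degree. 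Careful bookkeeping of how many sub-pieces are generated at each round, each costing two $k$th powers for the trapezium plus a cleanup at the base, should give the $\ln$-type count; the $w_F(k)$ (and its square) enters only when, at the very end, we must replace the scalar-$k$th-power combinations coming from Corollary~\ref{cor:vandermonde} by genuine sums of $k$th powers over $F$, and again when $Q$ itself (being monic in $x$ rather than of the form $\delta R^k$) is already a $k$th power so costs nothing, but the leftover base triangles of bounded size $O(k^2)\times O(k^2)$ contribute the $k^4\ln k\, w_F(k)^2$ term.

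In more detail, here is the recursion I would set up. Define a ``box'' to be a polynomial whose Newton polygon sits inside a translate $x^{a}y^{b}\cdot(\text{triangle of side }\le \ell)$ with $\ell\le d'$, $k\mid a$, $k\mid b$, $k\mid \ell$; the ``cost'' of decomposing a box into $k$th powers is what we track. Splitting a box of side $\ell$ in $x$ produces: one sub-box $x^m B$ with $x$-exponent shifted (handled by pulling out the $k$th power $x^m$), one application of Proposition~\ref{prop:root} contributing the term $x^m y^n$ (a $k$th power) and the term $Q^k$ (a $k$th power), and one remainder box $R$ whose $x$-extent is at most $\ell(1-1/k)$ — but whose $y$-extent may have grown up to the full $\ell$. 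To keep the $y$-extent bounded I would alternate: do a round of $x$-cuts until the $x$-degree is below $k^2$, then a round of $y$-cuts (by the symmetric version of Proposition~\ref{prop:root}) until the $y$-degree is below $k^2$; the key quantitative claim is that one full $x$-round needs about $\lceil\log_{k/(k-1)}(\ell/k^2)\rceil \le k\ln(\ell/k^2)+1$ cuts, each cut doubling the number of active boxes in the worst case, which is where the $\ln(2k)$ and the product structure $2k^3\ln(d/k+1)\ln(2k)$ come from; and that after both rounds every active box has side $\le k^2$ in each coordinate, hence degree $\le 2k^2$, and there are at most $\approx (2k)^{\#\text{rounds}}$ of them — I would instead argue more carefully that the number of residual small boxes is polynomially bounded in $k$ times a logarithmic factor, not exponential, by observing that the remainders from different cuts occupy disjoint vertical strips, so they do not multiply. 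That disjointness observation is what turns an exponential count into the stated polynomial-times-log bound.

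Finally I would clean up: each residual box of side $\le 2k^2$, after stripping its monomial $k$th-power prefactor, is a polynomial of degree $\le 2k^2$, which by Corollary~\ref{cor:vandermonde} is a sum of $k\cdot w_F(k)$ $k$th powers of degree $\le k\cdot 2k^2=2k^3\le d'$... but wait, I need degree $\le d+k^3$ overall, and $2k^3$ could exceed that only if $d<k^3$, excluded since $d\ge 2k^4$; so the degree bound $\deg Q_i^k\le d'\le d+k^3$ is maintained throughout, because every $k$th power we ever write down — whether $x^m y^n$, or $Q^k$ with $\deg Q\le d'/k$, or a scaled box piece — has degree at most $d'$, and $d'\le d+k^3$ by choice of $d'$. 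Tallying: the trapezium steps contribute $2\times(\text{number of cuts})$ powers, the base boxes contribute $(\text{number of base boxes})\times k\, w_F(k)$ powers, and the univariate leftovers another $k\,w_F(k)$ each; bounding the number of cuts by $2k^3\ln(d/k+1)\ln(2k)$ and the base-box contribution by $7k^4\ln(k)w_F(k)^2$ (the extra $w_F(k)$ coming from the fact that Corollary~\ref{cor:vandermonde}'s scalar version must be iterated, or from a coarser count of base boxes) gives the theorem.

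The main obstacle, as flagged, is controlling the \emph{number} of pieces: the naive recursion branches and one must prove that the branches live in disjoint regions of the Newton polygon so that the count stays logarithmic rather than exponential, and simultaneously that alternating $x$- and $y$-reduction rounds genuinely terminate with all pieces of side $O(k^2)$ without the two coordinates' extents ballooning each other. The degree control, by contrast, is essentially automatic once we fix $d'$ to be the next multiple of $k^3$, since Proposition~\ref{prop:root} is designed so that $Q^k$ and the added monomial both fit inside $\bar\Gamma(P)$.
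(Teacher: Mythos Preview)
Your proposal contains a genuine misreading of Proposition~\ref{prop:root} that breaks the recursion you set up. You apply the proposition to $A$ with parameters $m=d'/2$, $n=d'-m=d'/2$, and then assert that the remainder $R=A+x^{m}y^{n}-Q^{k}$ ``is supported strictly below the line $x^{m-m/k}$'', i.e.\ that $\deg_{x}R<m-m/k$. But Proposition~\ref{prop:root} only guarantees that $R$ has no monomial $x^{i}y^{j}$ with $i\ge m-\tfrac{m}{k}$ \emph{and} $j\ge n-\tfrac{n}{k}$: the cleared region is the trapezium $T$, not the whole vertical strip. Since here $n=d'/2>0$, the polynomial $Q^{k}$ (and hence $R$) can perfectly well carry monomials with $x$-degree in $[m-\tfrac{m}{k},m)$ and $y$-degree below $n-\tfrac{n}{k}$; these are produced by cross-terms in $Q^{k}$ involving $b_{0}(y)=y^{n/k}$ fewer than $k-1$ times. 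So one application of the proposition does \emph{not} shrink the $x$-degree by the factor $(1-\tfrac1k)$, and your geometric iteration ``$O(k\ln(d/k))$ rounds bring $\deg_{x}$ to $0$'' does not go through. The alternation with $y$-cuts does not rescue this, because each $y$-cut has the symmetric defect.

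The paper avoids this pitfall by organising the argument differently: it starts the very first cut with $n=0$ (so the trapezium \emph{is} a full vertical strip), then sweeps $m_{i}$ downward in a single sequential pass (no binary splitting into $A$ and $x^{m}B$), accepting that at each subsequent step a residue with high $x$-degree but low $y$-degree survives below the trapezium. The point is that this residue has \emph{smaller total degree}; after the full inner sweep down to $\deg_{x}<k^{2}$, the total degree of everything with $\deg_{x}\ge k^{2}$ has dropped by at least $d/k^{2}-k$, and one then repeats the sweep. Two nested loops (not a branching recursion) give the product $k\ln(\tfrac{d}{k}+1)\times k^{2}\ln(2k)$, and the leftover strip $\deg_{x}<k^{2}$ is handled column by column using the one-variable strict result from \cite{GaVa}, which is where the $w_{F}(k)^{2}$ arises. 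Your worry about exponential branching is therefore a symptom of the wrong recursion rather than the real difficulty; in the paper's scheme there is no branching at all, and the ``disjoint vertical strips'' observation you reach for is unnecessary.
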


The bound for $s$ is derived from a sharper bound given at the end of the proof.
We start by sketching the proof by pictures:
\begin{center}
\begin{figure}[H]
  \includegraphics[scale=1]{fig_waring.6}
\qquad\qquad
  \includegraphics[scale=1]{fig_waring.8}
\end{figure}
\end{center}

We consider the Newton polygon of $P$, it is included in a large triangle
(see the left figure).
We first cut off trapeziums, corresponding to monomials of higher degree.
Each trapezium corresponds to a polynomial $Q_i^k$ computed by 
an approximate $k$th root as explained in Section~\ref{sec:root}.
It enables to lower the degree of $P$, except for monomials whose degree in $x$ is 
less than $k^2$ that will be treated at the end. We iterate this process until we get
a polynomial of degree less than $\frac{d}{k}$ (right figure) 
to which we will apply Corollary \ref{cor:vandermonde}.

\medskip

\textbf{Notation.} We will denote $\lceil x \rceil_k = k \left\lceil \frac x k \right\rceil$ the least integer
larger or equal to $x$ and divisible by $k$.

\medskip

\textbf{First step: lower the degree.}
Set $d = \deg P$, $m_0 = \lceil d \rceil_k$
and $P_0 := P$.
We apply Proposition \ref{prop:root} to $P_0=P$, with $P_0$ considered as a polynomial 
of total degree $\leq m_0$ and $m=m_0$, $n=0$.
It yields a polynomial $Q_0(x,y)$ such that $\deg_x (P+x^{m_0}-Q_0^k) < m_0 - \frac{m_0}{k}$.
That is to say we have canceled a trapezium, which is there the triangle $(m_0,0)$, $(m_0- \frac{m_0}{k},0)$,
$(m_0- \frac{m_0}{k},\frac{m_0}{k})$.

We then set $m_1 = \lceil m_0 \rceil_k - \frac{\lceil m_0 \rceil_k}{k}$
and $P_1 = P_0 + x^{m_0} - Q_0^k$. Note that $\deg_x P_1 < m_1$ and we apply Proposition \ref{prop:root} to $P_1$.

To iterate the process, consider the decomposition $P_i = P_i'+ x^{m_i}\cdot P''_i$
with $\deg_x P_i' < m_i$.
We apply Proposition \ref{prop:root} to $P_i'$ (with $m=\lceil m_i \rceil_k$ and $n=n_i$ such that
$\lceil m_i \rceil_k+n_i=m_0$) that yields $Q_i$ such that $P_i'+x^{\lceil m_i \rceil_k}y^{n_i}-Q_i^k$ has no monomials 
in the corresponding trapezium whose $x$-coordinates are in between $\lceil m_i \rceil_k$ and 
$m_{i+1} := \lceil m_i \rceil_k - \frac{\lceil m_i \rceil_k}{k}$.
Notice that $P_{i+1} := P_i'+x^{\lceil m_i \rceil_k}y^{n_i}-Q_i^k + x^{m_i}\cdot P''_i$ also does not have monomials
in this trapezium.

Here is an example, set $d = 45$ and $k=3$ then we get 
$m_0 = 45$, $m_1 = 30$, $m_2=20$, $m_3=14$, $m_4 = 10$, $m_5=8$ and then we stop since 
$m_5 < k^2$. It implies that the first trapezium has its $x$-coordinates in between $45$ and $30$,
the second one between $30$ and $20$,... The height of the left side of each trapezium is always $\frac d k = 15$.
The picture is the following:
\begin{center}
\begin{figure}[H]
  \includegraphics[scale=0.5]{fig_waring.7}
\end{figure}
\end{center}


\textbf{End of iterations.}
We iterate the process until we reach monomials whose degree in $x$ is less than $k^2$.
That is to say we look for $\ell$ such that $m_\ell \le k^2$.

First notice that
\begin{align*}
  m_{i+1} &=  \lceil m_i \rceil_k - \frac{\lceil m_i \rceil_k}{k} \\
          &= (k-1)\left\lceil\frac {m_i}{k} \right\rceil  \\
          &\le \left(1-\frac 1k\right) m_i + k -1. \\
\end{align*}
Then, by induction
\begin{align*}
  m_i &\le  \left(1-\frac 1k\right)^i m_0 + (k-1)\left(1+\left(1-\frac 1k\right)+\left(1-\frac 1k\right)^2+\cdots+\left(1-\frac 1k\right)^{i-1}\right) \\
      &\le \left(1-\frac 1k\right)^i m_0 + k(k-1) \\
      &\le  (d+k)e^{-\frac i k} + k(k-1), \qquad \text{ since } \left(1-\frac 1k\right) \le e^{-\frac 1k}. \\
\end{align*}

Now, for $\ell \ge k \ln(\frac{d}{k}+1)$ we get $m_\ell \le k^2$.

\medskip

\textbf{Fall of the total degree.}
At the end of the first series of iterations the total degree 
(of the monomials whose degree in $x$ is more or equal to $k^2$)
falls (see the picture below).
\begin{center}
\begin{figure}[H]
  \includegraphics[scale=1]{fig_waring.9}
\end{figure}
\end{center}
We give a lower bound for this fall $\delta_0$ of the degree (starting from degree $m_0$):
\begin{align*}
  \delta_0 &\ge \frac{m_0}{k} - \frac{\lceil m_1 \rceil_k}{k} \\
         &= \left\lceil \frac {d}{k} \right\rceil  - 
  \left\lceil \frac {k\left\lceil \frac {d}{k} \right\rceil - \left\lceil \frac {d}{k} \right\rceil}{k} \right\rceil 
  \quad (\text{since } d=\lceil m_0 \rceil_k) \\
         &\ge \left\lfloor \frac{\left\lceil \frac {d}{k} \right\rceil}{k} \right\rfloor \\
         &\ge \frac{d}{k^2} - 1. \\
\end{align*}

Therefore the total degree, starting now from degree $d$,
of the monomials whose degree in $x$ is more than $k^2$ has
fallen of more that $\delta \ge \frac{d}{k^2}-k$.

\medskip

\textbf{Iteration of the fall.}
Set $d_0 = d$. At each series of iterations the degree 
(of the monomials whose degree in $x$ is more or equal to $k^2$)
falls from $d_i$ to $d_{i+1} := d_i - \left\lfloor \frac{d_i}{k^2}-k \right\rfloor \le \left(1-\frac{1}{k^2}\right)d_i + k$,
so that (by a computation similar to the one for $m_i$ above) $d_i \le d e^{-\frac{i}{k^2}} + k^3$. 
Suppose that $d \ge 2k^4$, so that $\frac{d}{2k} + k^3 \le \frac{d}{k}$. 
Then for $\ell \ge k^2 \ln(2k)$, we get $d_\ell \le   \frac d k$.
Each fall of the degree needs less than 
$k\ln(\frac{d}{k}+1)$ iterations, so that 
we need to apply Proposition \ref{prop:root} many times, 
to get a total of $s_0=2k\ln(\frac{d}{k}+1) \times k^2 \ln(2k)$ 
$k$th powers.

\medskip

\textbf{Monomials of low degree in $x$.}
At this point, we have written 
$P = \sum_{i=1}^{s_0} Q_i^k  + P_1 + P_2$,
where $Q_1,\ldots,Q_{s_0}, P_1,P_2 \in F[x,y]$ are such that
$\deg Q_i^k \le \lceil d \rceil_k$, $\deg_x P_1 < k^2$, $\deg P_2 \le \frac{d}{k}$
(see the right picture below Theorem \ref{th:sum}).
By Corollary~\ref{cor:vandermonde} we can write $P_2$ as a sum 
$P_2= \sum_{i=1}^{s_2} Q_{i,2}^k$ of $s_2 \le k w_F(k)$ terms and 
$\deg Q_{i,2}^k \le k \left\lceil\frac{d}{k}\right\rceil = \lceil d \rceil_k < d+ k$.

Now write $P_1(x,y) = \sum_{0 \le j < k^2} x^j R_j(y)$, where $R_j \in F[y]$ with $\deg R_j \le d-j$.
By Corollary~\ref{cor:vandermonde}, write each $x^j$ as the sum of
$k w_F(k)$ terms of degree $\le j k$.
Then, for each $R_j(y)$ we apply the result in one variable \cite[Theorem 1.4 (iii)]{GaVa} (or we can do a similar work as before)
so that we can write (since $d \ge 2k^4$):
$R_j(y) = \sum_{i=1}^s S_{ij}^k(y)$ with $s \le k(w_F(k)+3\ln(k))+2$ and $\deg S_{ij}^k \le \deg R_j + k-1$.
We get $x^j R_j(y)$ as the sum of $s' \le k w_F(k) (k(w_F(k)+3\ln(k))+2)$, 
$k$th powers of degree $\le   j k + \deg R_j + k-1 \le d + k^3$ ($j=0,\ldots,k^2-1$).
Therefore, $P_1= \sum_{i=1}^{s_1} Q_{i,1}^k$ 
with $s_1 \le k^3 w_F(k) (k(w_F(k)+3\ln(k))+2)$ terms and 
$\deg Q_{i,1}^k \le d + k^3$.

\medskip

\textbf{Conclusion.}
For $d\ge 2k^4$ we can write $P(x,y)$ as the sum
$$P(x,y) = \sum_{i=1}^s Q_i^k(x,y)$$
such that $\deg Q_i^k \le d + k^3$ and $s\le s_0+s_2+s_1$ that is to say\footnote{This is the bound used to fill the numerical table of the introduction.}
$$s \le 2k^3\ln\left(\frac{d}{k}+1\right)\ln(2k) + k w_F(k) + k^3 w_F(k)(k(w_F(k)+3\ln(k))+2).$$
It yields the announced bound 
$s \le 2k^3\ln(\frac{d}{k}+1)\ln(2k) + 7k^4\ln(k) w_F(k)^2$.

\medskip

\textbf{Question.} Is it possible to have a sum 
$$P(x,y) = \sum_{i=1}^s Q_i^k(x,y)$$ such that $\deg Q_i^k \le \deg P + k^3$ and 
a bound $s$ depending only on $k$ and not on $\deg P$?



\begin{thebibliography}{99}

\bibitem{Bh} M. Bhaskaran, 
{\it Sums of mth powers in algebraic and abelian number fields.}
Arch. Math. (Basel) {17} (1966), 497-504; Correction,
ibid. 22 (1971), 370-371.

\bibitem{Bo} A. Bodin,
\emph{Decomposition of polynomials and approximate roots.}
Proc. Amer. Math. Soc.  138  (2010), 1989-1994. 

\bibitem{Ca1} M. Car, 
{\it New bounds on some parameters in the Waring problem for polynomials over a finite field.}
Contemporary Mathematics {461} (2008), 59-77. 

\bibitem{CG1} M. Car, L. Gallardo, 
{\it Sums of cubes of polynomials.} 
Acta Arith. {112} (2004), 41-50.

\bibitem{EfHa} G. Effinger, D. Hayes, 
{\it Additive number theory of polynomials over a finite field.} 
Oxford Mathematical Monographs, Clarendon Press, Oxford (1991).

\bibitem{Ga1} L. H. Gallardo, 
{\it On the restricted Waring problem over $\mathbf{F}_{2^n}[t]$.}
 Acta Arith. {42} (2000), 109-113.

\bibitem{GaVa} L.~Gallardo, L.~Vaserstein,
\emph{The strict Waring problem for polynomial rings.}
J. Number Theory  128  (2008), 2963--2972.

\bibitem{Ku} R.M. Kubota, 
\emph{Waring's problem for $\mathbf{F}_q[x]$.} 
Dissertationes Math. (Rozprawy Mat.) {117} (1974).

\bibitem{Pa} R.E.A.C Paley, 
{\it Theorems on polynomials in a Galois field.}
Quarterly J. of Math. {4} (1933), 52-63.

\bibitem{Va1} L.N. Vaserstein, 
{\it Waring's problem for algebras over fields.}
J. Number Theory {26} (1987), 286-298.

\bibitem{Va3} L.N. Vaserstein, 
{\it Sums of cubes in polynomial rings.}
Math. Comput. {193} (1991), 349-357.

\bibitem{Va4} L.N. Vaserstein, 
{\it Ramsey's theorem and Waring's problem.}
In {\it The Arithmetic of Function Fields} (eds D. Goss and al), de Gruyter, 
NewYork-Berlin, (1992).

\bibitem{We} W.A. Webb, 
{\it Waring's problem in $GF[q,x]$.}  
Acta Arith. {22} (1973), 207-220.

\bibitem{LiWo} Y.-R. Yu, T. Wooley, 
{\it The unrestricted variant of Waring's problem in function fields.}
Funct. Approx. Comment. Math. {37} (2007), 285-291. 

\end{thebibliography}
\end{document}